\documentclass[11pt]{article}
\usepackage{float}
\usepackage{color}
\usepackage{amssymb}
\usepackage{amsmath}
\usepackage{amsthm}
\usepackage{graphicx}
\usepackage{color}

\def\'#1{\ifx#1i{\accent"13 \i}\else{\accent"13 #1}\fi}

\textwidth = 6.5 in
\textheight = 9 in
\oddsidemargin = 0.0 in
\evensidemargin = 0.0 in
\topmargin = 0.0 in
\headheight = 0.0 in
\headsep = 0.0 in
\parskip = 0.2in
\parindent = 0.0in
\usepackage{url}

\def\P{{\rm I\!P}}
\def\L{{\rm I\!L}}
\def\l{{\rm I\!l}}

\newtheorem{theorem}{Theorem}
\newtheorem{lemma}{Lemma}
\theoremstyle{definition}
\newtheorem{definition}{Definition}

\title{Pseudoachromatic and connected-pseudoachromatic indices of the complete graph
\thanks{Research supported by CONACyT-M\'exico under projects 178395,166306, and PAPIIT-M\'exico under project IN104915 and IN104609-3,. The second author is partially supported by a CONACyT-M{\' e}xico Postdoctoral Fellowship and by the National Scholarship Program of the Slovak Republic.}}

\author{M. Gabriela Araujo-Pardo \footnotemark[3] \\ \url{garaujo@matem.unam.mx}
\and Christian Rubio-Montiel \footnotemark[2] \\ \url{christian.rubio@fmph.uniba.sk}}

\begin{document}
\maketitle

\def\thefootnote{\fnsymbol{footnote}}
\footnotetext[3]{Instituto de Matem{\' a}ticas, Universidad Nacional Aut{\' o}noma de M{\' e}xico, 04510 M{\' e}xico City, Mexico.}
\footnotetext[2]{Department of Algebra, Comenius University,  84248 Bratislava, Slovakia. UMI LAFMIA 3174 CNRS at CINVESTAV-IPN, 07300 Mexico City, Mexico.}

\begin{abstract} 
A \emph{complete $k$-coloring} of a graph $G$ is a (not necessarily proper) $k$-coloring of the vertices of $G$, such that each pair of different colors appears in an edge. A complete $k$-coloring is also called \emph{connected}, if each color class induces a connected subgraph of $G$. The \emph{pseudoachromatic index} of a graph $G$, denoted by $\psi'(G)$, is the largest $k$ for which the line graph of $G$ has a complete $k$-coloring. Analogously the \emph{connected-pseudoachromatic index} of $G$, denoted by $\psi_c'(G)$, is the largest $k$ for which the line graph of $G$ has a connected and complete $k$-coloring.
\\

In this paper we study these two parameters for the complete graph $K_n$. Our main contribution is to improve the linear lower bound for the connected pseudoachromatic index given by Abrams and Berman [Australas J Combin 60 (2014), 314--324] and provide an upper bound. These two bounds prove that for any integer $n\geq 8$ the order of $\psi_c'(K_n)$ is $n^{3/2}$.
\\

Related to the pseudoachromatic index we prove that for $q$ a power of $2$ and $n=q^2+q+1$, $\psi'(K_n)$ is at least $q^3+2q-3$ which improves the bound $q^3+q$ given by Araujo, Montellano and Strausz [J Graph Theory 66 (2011), 89--97].  

\end{abstract}


\section{Introduction}
Let $G=(V(G),E(G))$ be a finite simple graph. A \emph{complete} $k$-coloring of $G$ is an assignment $\varsigma\colon V(G)\rightarrow [k]$ (where $[k]:=\{1,\dots,k\}$), such that for each pair of different colors $i,j\in [k]$ there exists an edge $xy\in E(G)$ where $x\in \varsigma^{-1}(i)$ and $y\in \varsigma^{-1}(j)$. The \emph{pseudoachromatic number} $\psi(G)$ of $G$ is the largest $k$ for which there exists a complete $k$-coloring of $G$ \cite{MR0256930}. Some interesting results on the pseudoachromatic number and the closely related notion of the achromatic number (the maximum in proper and complete colorings) can be found in \cite{MR1224703,MR0439672,MR0272662}. 

The \emph{connected-pseudoachromatic number} $\psi_c(G)$ of a connected graph $G$ is the largest $k$ for which there exists a connected and complete $k$-coloring of $G$, i.e., a complete $k$-coloring in which each color class induces a connected subgraph. The connected pseudoachromatic number $\psi_c(G)$ of a graph $G$ with components $G_1$,$G_2$,...,$G_t$ is the largest number between $\psi_c(G_1)$,$\psi_c(G_2)$,...,$\psi_c(G_t)$. Clearly, \[\psi_c(G)\leq\psi(G).\]
The previous definition is equivalent to saying that $\psi_c(G)$ is the size of the largest complete graph minor of $G$. This value is also called the \emph{Hadwiger number} of $G$. In 1958 Hadwiger conjectured (see \cite{hadwiger1958ungeloste}) that any graph $G$, $\chi(G)\leq\psi_c(G)$, where $\chi(G)$ denotes the chromatic number of $G$ as usual. Aside from its own importance the Hadwiger Conjecture is another motivation to study the previous parameters given the pseudoachromatic number bounding the Hadwiger number.  

The connected-pseudoachromatic and the pseudoachromatic numbers of the line graph $L(G)$ of a graph $G$ are also known as the \emph{connected-pseudoachromatic index } and the \emph{pseudoachromatic index} of $G$ denoted as $\psi_c'(G)$ and $\psi'(G)$ respectively.  In this paper we study these two parameters for the complete graph $K_n$. Note that any connected and complete $k$-coloring of $L(K_n)$ is an edge coloring of $K_n$ in which each edge color class induces a connected subgraph and each pair of edge color classes share at least one vertex, therefore, we will make use of this point of view. 

Note that the colorations that induce the lower bound are given using the structure of projectives planes, it is a common technique used to obtain results for the pseudoachromatic and achromatic indices of the complete graph $K_n$, for instance see \cite{MR3265137,MR3249588,MR0439672,MR2778722,MR543176,MR989126}.

Before continuing we give some known results on $\psi_c'(K_n)$ and $\psi'(K_n)$ related to our contributions. In the case of exact values of $\psi_c'(K_n)$ only the following small values of Table \ref{exact7} are known. Table \ref{exact7} appears in \cite{MR3265137}, which also proves that $\psi_c'(K_{5a+b+1})$ is at least $9a+b$.

\begin{table}[!htbp]
\begin{center}
\begin{tabular}{|c|cccccc|}
\hline \hline
$n$ & 2 & 3 & 4 & 5 & 6 & 7 \\
\hline
$\psi_c'(K_n)$ & 1 & 3 & 4 & 6 & 7 & 10 \\
\hline
\end{tabular}
\caption{\label{exact7}Exact values for $\psi_c'(K_n)$, $2\leq n\leq 7.$}
\end{center}
\end{table}

However, in Table \ref{exact13}  (which appears also in \cite{MR3249588}) the first pseudoachromatic index values of the complete graph are listed.

\begin{table}[!htbp]
\begin{center}
\begin{tabular}{|c|cccccccccccc|}
\hline \hline
$n$ & 2 & 3 & 4 & 5 & 6 & 7 & 8 & 9 & 10 & 11 & 12 & 13 \\
\hline
$\psi'(K_n)$ & 1 & 3 & 4 & 7 & 8 & 11 & 14 & 18 & 22 & 27 & 32 & 39 \\
\hline
\end{tabular}
\caption{\label{exact13}Exact values for $\psi'(K_n)$, $2\leq n\leq 13.$}
\end{center}
\end{table}
The pseudoachromatic index for a few classes of complete graphs have been found so far, namely, Bouchet proved in \cite{MR543176} that for $n = q^2 + q + 1$ and $q$ an odd integer $\alpha'(n) = qn$, if and only if the projective plane of order $q$ exists. Consequently, it is known $\psi'(K_{q^2+q+1})$ for any odd prime power $q$ (see \cite{MR2778722,MR543176}), however, when $q$ is a power of $2$ the pseudoachromatic index is bounding by $\psi'(K_{q^2+q+1})\geq q^3+q$ (see \cite{MR2778722}) and attains exact values for $K_{q^2+2q+1-a}$ when $a\in\{-1,0,\dots,\left\lceil\frac{1+\sqrt{4q+9}}{2}\right\rceil -1\}$ (see \cite{MR,MR3249588,MR2778722}). Finally, it is proved that $\psi'(K_n)$ grows asymptotically like $n^{3/2}$ (see \cite{MR989126}).

This paper is organized as follows: In the first section we give some definitions and lemmas related to projective planes and colorings, which will be used in the second and third sections in order to give lower bounds for the pseudoachromatic connected and pseudoachromatic indices of complete graphs. In Section 2 we prove that for any integer $n\geq 8$, $\psi_c'(K_n)$ has order $n^{3/2}.$ The proof is divided in the following results:  

\begin{theorem}\label{teo1}
If $n\geq 8$ then \[\psi'_{c}(K_{n})\leq\left\lfloor \max\left\{ \min\{f_n(x),g_n(x)\} \textrm{ with } x\in\mathbb{N}\right\}\right\rfloor\] where $f_n(x)=n(n-1)/2x$ and $g_n(x)=(x+1)(n-x-1/2)$.
\end{theorem}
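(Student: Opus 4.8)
The plan is to view a connected complete colouring of $L(K_n)$ as an edge-colouring of $K_n$ whose colour classes are connected subgraphs that pairwise share a vertex, and to bound the number of colours $k:=\psi'_c(K_n)$ in terms of the size $x$ of a \emph{smallest} colour class (i.e. its number of edges). I would prove two inequalities, $k\le f_n(x)$ and $k\le g_n(x)$, for this (a priori unknown) value of $x$. The first is immediate: every class has at least $x$ edges and the classes partition the $\binom{n}{2}=n(n-1)/2$ edges of $K_n$, so $kx\le n(n-1)/2$, that is $k\le f_n(x)$. Since $x$ is not known in advance, combining the two inequalities gives $k\le\min\{f_n(x),g_n(x)\}\le\max_{x\in\mathbb{N}}\min\{f_n(x),g_n(x)\}$, and the floor is free because $k$ is an integer.

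The substance is the bound $k\le g_n(x)$. Fix a smallest class $C$; being connected with $x$ edges it spans a set $S:=V(C)$ of $s\le x+1$ vertices. Every other class must share a vertex with $C$, hence contains an edge incident to $S$. I would split the classes into two types: a class is \emph{internal} if every one of its edges incident to $S$ has both endpoints in $S$, and \emph{crossing} if it uses at least one edge between $S$ and $V\setminus S$. The key observation is that connectivity forbids an internal class from having any edge inside $V\setminus S$: such an edge could be joined to the class's edges inside $S$ only through a crossing edge, which an internal class has none of. Thus every internal class is contained entirely in the clique $K_S$ on the $s$ vertices of $S$ (in particular $C$ itself is internal).

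It then remains to count the two types. The internal classes are edge-disjoint subgraphs of $K_S$, each with at least $x$ edges, and $K_S$ has only $\binom{s}{2}$ edges, so there are at most $\binom{s}{2}/x$ of them. The crossing classes inject into the set of edges between $S$ and $V\setminus S$ (send each to one such edge it contains), so there are at most $s(n-s)$ of them. Adding, $k\le \binom{s}{2}/x+s(n-s)$, and I would finish by checking that, for $s$ in the admissible range $1\le s\le x+1$ (where $s\ll n$), this expression is increasing in $s$, hence maximised at $s=x+1$, where it equals $(x+1)/2+(x+1)(n-x-1)=(x+1)(n-x-1/2)=g_n(x)$. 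The main obstacle is precisely this $g_n$ step: the naive count that merely assigns each of the other $k-1$ classes a distinct edge incident to $S$ overshoots $g_n(x)$ by $(x-1)^2/2$, and the saving comes only from the connectivity argument confining internal classes to $K_S$, which together with the minimum-size hypothesis caps their number at $\binom{s}{2}/x$. Verifying the monotonicity in $s$ (so that $s=x+1$ is the worst case) and the role of the hypothesis $n\ge 8$ in guaranteeing $s\ll n$ are the remaining routine checks.
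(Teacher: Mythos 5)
Your proof is correct and follows essentially the same route as the paper: the $f_n$ bound comes from the edge partition, and the $g_n$ bound from counting, for a smallest class $C$ on $s\le x+1$ vertices, the classes confined by connectivity to $K_S$ (at most $\binom{s}{2}/x$ by edge-disjointness and minimality of $x$) together with those using an edge between $S$ and $V\setminus S$ (at most $s(n-s)$). Where the paper simply asserts ``we can suppose $\varsigma^{-1}(k)$ induces a tree,'' you make the reduction to $s=x+1$ explicit via monotonicity of $\binom{s}{2}/x+s(n-s)$; that deferred check does require observing that $x<n/2$ (so that $s\le x+1$ stays below the vertex of the parabola), which follows from any linear lower bound on $\psi'_c(K_n)$, e.g.\ the one in the Abrams--Berman paper cited here.
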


\begin{theorem}\label{teo2}
If $n\geq 8$ then \[\psi'_{c}(K_{n})\leq \frac{1}{\sqrt{2}}n^{\frac{3}{2}}+\Theta(n).\]
\end{theorem}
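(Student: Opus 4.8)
The plan is to derive Theorem~\ref{teo2} as a direct optimization consequence of Theorem~\ref{teo1}. Theorem~\ref{teo1} gives
\[
\psi'_{c}(K_{n})\leq\left\lfloor \max_{x\in\mathbb{N}}\min\{f_n(x),g_n(x)\}\right\rfloor,
\]
so the task reduces to estimating $\max_{x}\min\{f_n(x),g_n(x)\}$ where $f_n(x)=\tfrac{n(n-1)}{2x}$ and $g_n(x)=(x+1)(n-x-\tfrac12)$. First I would observe that $f_n$ is strictly decreasing in $x$ while $g_n$, as a downward parabola in $x$, is increasing on the relevant initial range (for $x$ small compared to $n$). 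A standard \emph{min-max balancing} argument then applies: the maximum of the pointwise minimum of a decreasing function and an increasing function is attained essentially where the two curves cross, i.e. at the real $x^{\ast}$ solving $f_n(x^{\ast})=g_n(x^{\ast})$, and the optimal value is then well-approximated by $f_n(x^{\ast})$.

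Next I would solve $f_n(x)=g_n(x)$ asymptotically. Since we expect the crossing to occur at $x$ of order $\sqrt{n}$, I would write $x=c\sqrt{n}+o(\sqrt{n})$ and substitute. On the left, $f_n(x)=\tfrac{n(n-1)}{2x}\sim \tfrac{n^2}{2c\sqrt{n}}=\tfrac{1}{2c}n^{3/2}$. On the right, $g_n(x)=(x+1)(n-x-\tfrac12)\sim c\sqrt{n}\cdot n=c\,n^{3/2}$, because the $x$ term is lower order against $n$ inside the second factor. Equating leading coefficients gives $\tfrac{1}{2c}=c$, hence $c=\tfrac{1}{\sqrt 2}$. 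Substituting $c=1/\sqrt2$ back into either side yields the common value $\tfrac{1}{\sqrt2}\,n^{3/2}$, which produces the claimed leading term. I would then bound the discretization penalty from taking $x\in\mathbb{N}$ rather than $x=x^{\ast}\in\R$ and the floor function, both of which are absorbed into the $\Theta(n)$ error; since $f_n$ and $g_n$ have derivatives of order $n$ near the optimum and the integer $x$ is within distance $1$ of $x^{\ast}$, rounding changes the value by $O(n)$.

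The final step is to make the error term rigorous rather than merely heuristic. The hard part will be controlling the lower-order corrections honestly: the $n-1$ versus $n$ in $f_n$, the $-x-\tfrac12$ correction inside $g_n$, and the effect of evaluating at the nearest integer all contribute terms, and I must verify these collect into $\Theta(n)$ and do not secretly perturb the $n^{3/2}$ coefficient. Concretely, I would expand $x^{\ast}=\tfrac{1}{\sqrt2}\sqrt{n}+O(1)$ to one more order (solving the exact quadratic/rational equation $f_n(x)=g_n(x)$ and Taylor-expanding the root), then plug this refined $x^{\ast}$ into $f_n$ and track every surviving term up to and including order $n$, discarding only $o(n)$. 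The upper-bound direction is immediate since $\min\{f_n,g_n\}\le f_n(x^{\ast})$ attains the stated value; for matching the $\Theta(n)$ claim I only need an $O(n)$ two-sided bound, which the smoothness of $f_n,g_n$ near $x^{\ast}$ guarantees. Combining with the floor in Theorem~\ref{teo1}, which costs at most $1$, completes the estimate.
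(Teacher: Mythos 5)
Your proposal is correct and follows essentially the same route as the paper: the paper's Lemma~\ref{lema1} formalizes exactly your min-max balancing step (the maximum of $\min\{f_n,g_n\}$ is attained at the crossing point, computed there exactly as $x_0=\sqrt{n/2+1/16}-1/4$), and the proof of Theorem~\ref{teo2} then evaluates $g_n(x_0)=(n-1)\bigl(\sqrt{n/2+1/16}+1/4\bigr)$ to get the stated bound. Your worry about the discretization and floor is unnecessary for an upper bound, since restricting the maximum to $x\in\mathbb{N}$ can only decrease it below the real-valued optimum.
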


\begin{theorem}\label{teo3}
If $q$ is a prime power and $n=q^2+q+1$ then \[\left\lceil \frac{q}{2} \right\rceil n\leq \psi'_c(K_n).\]
\end{theorem}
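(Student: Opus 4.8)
The plan is to exploit the projective plane $\Pi_q$ of order $q$, which exists because $q$ is a prime power, and convert the lower bound into an explicit edge-colouring of $K_n$. First I would identify the $n=q^2+q+1$ vertices of $K_n$ with the points of $\Pi_q$. The structural fact I rely on is that the $n$ lines of $\Pi_q$ \emph{partition} the edge set of $K_n$: each pair of points lies on exactly one line, each line carries the $\binom{q+1}{2}$ edges of the clique on its $q+1$ points, and the count $n\binom{q+1}{2}=\binom{n}{2}$ confirms that these cliques tile $E(K_n)$.

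Next, within each line $\ell$, which spans a copy of $K_{q+1}$, I would take a Hamiltonian decomposition and use its pieces as colour classes. When $q$ is even, $q+1$ is odd and $K_{q+1}$ decomposes into $\tfrac{q}{2}$ edge-disjoint Hamiltonian cycles; when $q$ is odd, $q+1$ is even and $K_{q+1}$ decomposes into $\tfrac{q+1}{2}$ edge-disjoint Hamiltonian paths. In both cases the number of pieces equals $\lceil q/2\rceil$, and each piece is connected and spans all $q+1$ points of $\ell$. Running this over all $n$ lines produces a partition of $E(K_n)$ into $\lceil q/2\rceil\,n$ colour classes, each inducing a connected subgraph; this secures the connectivity requirement for the coloring of $L(K_n)$.

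It then remains to verify the completeness condition, namely that any two colour classes are adjacent in $L(K_n)$, i.e.\ share a vertex. Two classes coming from the same line $\ell$ both span $\ell$ and hence share all $q+1$ of its points. Two classes coming from distinct lines $\ell_1,\ell_2$ each contain the unique intersection point $\ell_1\cap\ell_2$, again because each class spans its own line, so they meet there. Thus every pair of colours appears on a pair of incident edges, the coloring is complete, and we conclude $\psi'_c(K_n)\ge\lceil q/2\rceil\,n$.

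The argument is essentially combinatorial bookkeeping once the two ingredients are in hand, so I do not anticipate a deep obstacle. The one point needing care is the parity split: checking that the classical Hamiltonian decomposition of the complete graph yields exactly $\lceil q/2\rceil$ spanning connected pieces in both the odd and even cases, and that each piece genuinely contains every point of its line, since this is precisely what forces classes from distinct lines to meet at their common point.
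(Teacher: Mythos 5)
Your proposal is correct and follows essentially the same route as the paper: identify $V(K_n)$ with the points of $\Pi_q$, decompose each line's $K_{q+1}$ into $\lceil q/2\rceil$ spanning connected classes (Hamiltonian cycles for $q$ even, Hamiltonian paths for $q$ odd), and use the fact that any two lines meet in a point to get completeness. The only cosmetic difference is that you verify the pairwise-intersection property directly, whereas the paper packages it as Lemma~\ref{lema2} via the notion of a line being an ``owner'' of its colors.
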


\begin{theorem}\label{teo4}
If $n\geq 8$ then \[\frac{1}{2}n^{\frac{3}{2}}+\Theta(n) \leq \psi'_{c}(K_{n}).\]
\end{theorem}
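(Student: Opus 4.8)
The plan is to deduce the general lower bound from Theorem \ref{teo3}, which already furnishes the estimate at the ``good'' orders $n_q:=q^2+q+1$ for $q$ a prime power. Indeed, expanding $n_q^{3/2}=q^3(1+q^{-1}+q^{-2})^{3/2}=q^3+\tfrac32q^2+\Theta(q)$ and comparing with $\lceil q/2\rceil n_q\geq \tfrac{q}{2}(q^2+q+1)=\tfrac12q^3+\Theta(q^2)$ shows that at every such order Theorem \ref{teo3} gives $\psi'_c(K_{n_q})\geq \tfrac12 n_q^{3/2}+\Theta(n_q)$, since $q^2=\Theta(n_q)$. So it remains to fill in the values of $n$ strictly between two consecutive good orders and to argue that the loss incurred there is of low order.

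Two ingredients are needed. First I would prove a monotonicity lemma: $\psi'_c(K_n)\leq \psi'_c(K_{n+1})$ for all $n$. Given a connected and complete $k$-edge-coloring of $K_n$ (viewed as a coloring of $L(K_n)$), extend it to $K_{n+1}$ by coloring every edge incident to the new vertex $v_{n+1}$ with a single fixed color, say color $1$. Completeness is inherited because all previously present color pairs still occur on the old edges; and connectivity is preserved because the only class that changes, class $1$, now contains the star centred at $v_{n+1}$, whose centre is adjacent in color $1$ to every other vertex, so the class stays connected. Hence $\psi'_c$ is nondecreasing, and for arbitrary $n$ we may drop down to the largest good order below it:
\[
\psi'_c(K_n)\ \geq\ \psi'_c(K_{n_q})\ \geq\ \Big\lceil \tfrac{q}{2}\Big\rceil\, n_q,\qquad\text{where } q \text{ is the largest prime power with } n_q\leq n.
\]

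Second, I would control $q$ in terms of $n$. Since $q^2<n_q\leq n$ we have $q\leq\sqrt n$; for a matching lower bound on $q$ one needs a bound on the gap between consecutive prime powers near $\sqrt n$. Bertrand's postulate already gives $q\geq\tfrac12\sqrt n$ up to $O(1)$, which suffices to conclude $\psi'_c(K_n)=\Theta(n^{3/2})$ and hence, together with Theorems \ref{teo1}--\ref{teo2}, that the order of $\psi'_c(K_n)$ is $n^{3/2}$. To retain the leading constant $\tfrac12$ I would instead invoke the stronger fact that there is always a prime, hence a prime power, in $(\sqrt n-o(\sqrt n),\,\sqrt n]$, giving $q=\sqrt n\,(1-o(1))$ and therefore $\tfrac12 q^3=\tfrac12 n^{3/2}(1-o(1))$.

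The main obstacle is precisely this last step: making the error term genuinely of the stated order. After the expansion above, dropping from $n$ to $n_q$ costs about $\tfrac32 n\,(\sqrt n-q)$, so the quality of the conclusion is governed entirely by how close the largest prime power below $\sqrt n$ lies to $\sqrt n$. I would quantify the loss using the best available prime-gap estimate, namely the existence of a prime in $(x-x^{0.525},x]$, which bounds the loss by $o(n^{3/2})$ and lets it be absorbed into the lower-order term of Theorem \ref{teo4}; any stronger gap bound tightens the error accordingly. This is the only delicate point, since the monotonicity lemma and the evaluation of Theorem \ref{teo3} at the good orders are routine.
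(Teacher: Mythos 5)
Your proposal follows essentially the same route as the paper: the paper's Lemma \ref{lema3} is exactly your monotonicity step (stated for $\psi_c$ of subgraphs and applied to the line graph), and the paper then combines Theorem \ref{teo3} at $n_q=q^2+q+1$ for the largest suitable prime $q$ with a Prime Number Theorem strengthening of Bertrand's postulate (a prime in $(x,(1+\epsilon)x)$ for every $\epsilon>0$ and large $x$) to control the drop from $n$ to $n_q$. Your extra care with explicit prime-gap bounds to pin down the error term is, if anything, more precise than the paper's ``let $\epsilon\to 0$'' conclusion, but the argument is the same.
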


Comparing the results of Theorems \ref{teo1} and \ref{teo2} with the upper bounds for the pseudoachromatic number given in \cite{MR,MR3249588,MR2778722,MR989126} we have that $\psi'_c(K_n)<\psi'(K_n)$ since $\psi'(K_n)=n^{\frac{3}{2}}+\Theta(n)$. Furthermore, Theorems \ref{teo3} and \ref{teo4} improve the linear lower bound that appears in \cite{MR3265137} and verifies the Hadwiger Conjecture for the line graph of the complete graphs. Finally, Theorems \ref{teo2} and \ref{teo4} show that $\psi'_c(K_n)$ grows asymptotically like $\Theta(n^{3/2})$.

In the third section we study the pseudoachromatic index $\psi'(K_n)$ of the complete graph $K_n$ for $n=q^2+q+1$ and $q$ a power of $2$. We improve the lower bound that appears in \cite{MR2778722} proving the following theorem: 

\begin{theorem}\label{teo5}
If $q$ is a power of $2$ and $n=q^2+q+1$ then \[ q^3+2q-3\leq \psi'(K_n).\]
\end{theorem}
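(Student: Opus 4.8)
The plan is to give an explicit complete coloring of $L(K_n)$ built on the projective plane $PG(2,q)$, which exists because $q$ is a power of $2$. I would identify $V(K_n)$ with the $n=q^{2}+q+1$ points of $PG(2,q)$. Two structural facts drive everything. First, since every pair of points lies on a unique line, the $n$ lines \emph{partition} $E(K_n)$: each line carries the clique $K_{q+1}$ on its $q+1$ points, and $\binom{n}{2}=n\binom{q+1}{2}$. Second, as noted in the introduction, two color classes are adjacent in $L(K_n)$ exactly when their vertex supports share a point, and any two lines of $PG(2,q)$ meet. Hence if every color class has a support containing a full line, the coloring is automatically complete, and the whole problem reduces to a packing statement: partition almost all of $E(K_n)$ into classes, each covering all $q+1$ points of some line, using as few edges per class as possible.

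Covering $q+1$ points needs at least $\lceil(q+1)/2\rceil=q/2+1$ edges (one point doubly covered, the rest simply), so the cheapest covering class is a near-perfect matching of size $q/2$ together with one incident edge at its missed point. Writing $t=q/2+1$, the edge budget fits exactly: a direct computation gives
\[\binom{n}{2}=\left(\frac{q}{2}+1\right)(q^{3}+2q-3)+3,\]
so $t(q^{3}+2q-3)$ edges suffice for $q^{3}+2q-3$ covering classes, with only $3$ edges to spare. This is what makes $q^{3}+2q-3$ the natural target, and it is why the hypothesis $q$ even (so $q+1$ is odd and $\binom{q+1}{2}$ admits no perfect matching) is precisely the hard case that weakens the bound below the odd-$q$ value $qn$.

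To realize the packing I would first work inside each line-clique $K_{q+1}$. Since $q+1$ is odd, $K_{q+1}$ has a near-$1$-factorization into $q+1$ near-perfect matchings, and from these I would assemble $q-1$ edge-disjoint covering classes of size $t$, spending all but one edge of the clique; this yields $n(q-1)=q^{3}-1$ classes and leaves exactly one residual edge per line, i.e. $n$ residual edges in all. The crux is then to choose these residual edges coherently across the plane so that the $n$ of them reassemble into $2q-2$ further covering classes of size $t$, leaving only the predicted $3$ edges unused. Adding these yields $(q-1)+(2q-2)$ classes per line-count, that is
\[n(q-1)+(2q-2)=(q-1)(q^{2}+q+3)=q^{3}+2q-3\]
classes, each covering a full line, so that all pairs are adjacent and the coloring is complete.

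The main obstacle is exactly this last coordination. The residual edges sit one-per-line and scattered, yet I must select them so that $q/2+1$ of them simultaneously cover all points of a common line, and do so $2q-2$ times with deficiency exactly $3$. Establishing that such a coherent system of residual edges exists is the delicate design step, and it is where the field structure of $\mathbb{F}_q$ (available since $q=2^{k}$) and the geometry of $PG(2,q)$ do the real work; once it is in place, the counting above and the automatic completeness coming from the fact that lines pairwise meet finish the proof.
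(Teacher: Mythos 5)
Your framework is the same as the paper's: identify $V(K_n)$ with the points of $\Pi_q$, use the line decomposition of $E(K_n)$, aim for color classes of size $q/2+1$ each of whose supports contains a full line (so that completeness follows from the fact that any two lines meet, which is the paper's Lemma~\ref{lema2}), and the edge count $\binom{n}{2}=(q/2+1)(q^3+2q-3)+3$ is exactly the remark the paper makes in step {\it ii)} of its proof. The problem is that the entire content of the theorem lives in the ``delicate design step'' you defer to an existence claim, and the specific form you prescribe for it cannot be realized. You propose that each line contribute $q-1$ internal covering classes plus one residual edge, and that the $n$ residual edges (one per line) then reassemble into $2q-2$ further classes of size $q/2+1$, \emph{each covering a full line}. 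But a class of $q/2+1$ edges covering all $q+1$ points of a line $l$ must contain at least $q/2$ edges lying inside $l$: an edge not contained in $V(l)$ meets $l$ in at most one point, so with $a$ edges inside $l$ and $b$ outside, $2a+b\geq q+1$ and $a+b=q/2+1$ force $a\geq q/2$. A class built solely from residual edges contains at most one edge of any given line, so for $q\geq 4$ no such class can cover a line, and your completeness mechanism breaks for precisely those $2q-2$ classes.

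The paper avoids this by never forming classes out of residual edges alone. It fixes a line $l_n=\{v_1,\dots,v_{q+1}\}$, colors the $q(q-1)$ lines through $v_1,\dots,v_{q-1}$ (and also $l_n,l_{n-1},l_{n-2}$) with ``Type $\mathcal{C}$'' colorings that leave one special edge $v_iu$ uncolored per line, choosing the endpoint $u$ to lie on a designated line of the pencils through $v_q$ or $v_{q+1}$. The $q/2$ special edges at $v_i$ pointing into a designated line $l'$ form a star at $v_i$ hitting $q/2$ distinct points of $l'$, and the ``Type 2'' coloring then splits $E(l')$ together with this star into $q$ classes of size $q/2+1$, each consisting of a near-perfect matching of $l'$ plus one matching or star edge, so each still covers $V(l')$. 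This gives $(q^2-q+3)(q-1)+(2q-2)q=q^3+2q-3$ classes, with the last $3$ special edges absorbed into an existing color. So the residual edges are appended to classes that already contain a near-perfect matching of a line, rather than grouped among themselves; that is the coordination your sketch is missing, and it requires no field structure beyond the existence of $\Pi_q$ and of $1$-factorizations of even complete graphs.
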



\section{Preliminaries}


A \emph{projective plane} consists of a set of $n$ points, a set of lines, and an incidence relation between points and lines having the following properties:
\begin{enumerate} \item Given any two distinct points there is exactly one line incident with both.
\item Given any two distinct lines there is exactly one point incident with both.
\item There are four points, such that no line is incident with more than two of them. \end{enumerate}
Such a plane has $n= q^2 + q+ 1$ points (for some number $q$) and $n$ lines, each line contains $q+1$ points and each point belongs to $q+1$ lines. The number $q$ is called the \emph{order} of the projective plane. A projective plane of order $q$ is called $\Pi_q$. If $q$ is a prime power there exists $\Pi_q$, which is called the \emph{algebraic projective plane} since it arises from finite fields (see \cite{MR554919}).

Let $\P$ be the set of points of $\Pi_q$ and let $\L= \{\l_1,\dots, \l_n\}$ the set of lines of $\Pi_q$. Now identify the points of $\Pi_q$ with the set of vertices of the complete graph $K_n$.  In a natural way the set of points of each line of $\Pi_q$  induces in $K_n$ a subgraph isomorphic to $K_{q+1}$.  For each line $\l_i\in \L$, let $l_i = (V(l_i), E(l_i))$ be the subgraph of $K_n$ induced by the set of $q+1$ points of $\l_i$.  By the properties of the projective plane, for every pair $\{i,j\}\subseteq \{1,\dots,n\}$, $|V(l_i)\cap V(l_j)| =1$ and $\{E(l_1), \dots, E(l_n)\}$ is a partition of $E(K_n)$.  This way, when we say that a graph $G$ isomorphic to $K_n$ is a \emph{representation of the projective plane} $\Pi_q$ it is understood that $V(G)$ is identified with the  points of $\Pi_q$ and that there is  a family of subgraphs (lines) $\{l_1, \dots, l_n\}$ of $G$, such that for each  line $\l_i$  of $\Pi_q$, $l_i$ is the induced subgraph by the set of points  of $\l_i$. 

Given two graphs $G$ and  $H$,  the \emph{join} $G\oplus H$, is defined as the graph with vertex set $V(G)\cup V(H)$ and edge set $E(G)\cup E(H)\cup (V(G) \times V(H))$ (to abbreviate we write  the set of edges $V(G) \times V(H)$ as $V(G)V(H)$-edges). Given $S\subseteq V(G)$, $G\setminus S$ is the subgraph of $G$ induced by $V(G)\setminus S$.  

Given an edge-coloring $\varsigma\colon E(K_n)\rightarrow [k]$ we say that a vertex $u\in V(K_n)$ is an \emph{owner} of a set of colors $\mathcal{C}\subseteq [k]$  whenever for every $i\in \mathcal{C}$ there is $v\in V(K_n)$, such that $\varsigma(uv) = i$; and given a subgraph $G$ of $K_n$ we will say that $G$ is an \emph{owner} of a set of colors   $\mathcal{C}\subseteq [k]$, if each vertex of $G$ is an owner of $\mathcal{C}$. By this definition $\varsigma$ is a complete edge-coloring, if for every pair of colors in $[k]$ there is a vertex in $K_n$ which is an owner of both colors.

\begin{lemma}\label{lema2} Let  $n= q^2 + q+ 1$ with $q$ a natural number, such that $ \Pi_q $ exists. Let $K_n$ be a representation of $\Pi_q$ and let $\varsigma\colon E(K_{n})\rightarrow [k]$ be an edge-coloring of $K_{n}$. Suppose that each line $l_i$  of $K_n$ is an owner of a set of colors $\mathcal{C}_i\subseteq [k]$. Then for every pair of colors $\{c_1, c_2\}\subseteq \bigcup\limits_{i=1}^{n} \mathcal{C}_i$ there is $u\in V(G)$, which is an owner of $c_1$ and $c_2$.\end{lemma}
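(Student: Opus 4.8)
The plan is to exploit the single defining feature of a projective plane that has not yet been used, namely that any two distinct lines meet in exactly one point. Since the statement concerns only a fixed pair of colors, I would fix $\{c_1,c_2\}\subseteq\bigcup_{i=1}^{n}\mathcal{C}_i$ and locate indices $i_1,i_2\in\{1,\dots,n\}$ with $c_1\in\mathcal{C}_{i_1}$ and $c_2\in\mathcal{C}_{i_2}$; such indices exist precisely because $c_1$ and $c_2$ lie in the union of the sets $\mathcal{C}_i$. Everything then reduces to producing a single vertex that owns both colors, which is exactly the completeness requirement for the pair $\{c_1,c_2\}$.

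First I would dispose of the degenerate case $i_1=i_2$. Here $c_1$ and $c_2$ both belong to $\mathcal{C}_{i_1}$, and by hypothesis the line $l_{i_1}$ is an owner of $\mathcal{C}_{i_1}$. Since a subgraph owning a color set means, by definition, that \emph{every} one of its vertices owns that set, any vertex $u\in V(l_{i_1})$ is simultaneously an owner of $c_1$ and of $c_2$, as desired.

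For the main case $i_1\neq i_2$ I would invoke property (2) of the projective plane: the distinct lines $l_{i_1}$ and $l_{i_2}$ meet in exactly one point, say $u$, so $u\in V(l_{i_1})\cap V(l_{i_2})$. Because $l_{i_1}$ owns $\mathcal{C}_{i_1}\ni c_1$, the vertex $u$ owns $c_1$; because $l_{i_2}$ owns $\mathcal{C}_{i_2}\ni c_2$, the same vertex $u$ owns $c_2$. Hence $u$ is an owner of $\{c_1,c_2\}$, that is, $u$ is incident with an edge colored $c_1$ and an edge colored $c_2$, which completes the argument.

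There is no serious obstacle here; the entire content is the observation that the ``every vertex'' strength built into the definition of a subgraph-owner is exactly what allows a single shared point to serve both colors, so the pairwise-intersection axiom of $\Pi_q$ immediately furnishes the common owner $u$. The only points requiring care are the separate treatment of the case where both colors come from the same line, and the bookkeeping that being an owner of $\{c_1,c_2\}$ is precisely the completeness condition for that pair of colors.
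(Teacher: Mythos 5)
Your argument is correct and is essentially identical to the paper's proof: both split into the case where one line owns both colors (so every vertex of that line works) and the case of two distinct lines, where the unique intersection point guaranteed by the projective plane axioms serves as the common owner. No differences worth noting.
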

\begin{proof} Let $\{c_1, c_2\}\subseteq \bigcup\limits_{i=1}^{n} \mathcal{C}_i$. If there is an $i\in \{1,\dots,n\}$, such that $\{c_1, c_2\}\subseteq \mathcal{C}_i$, since $l_i$ is an owner of $\mathcal{C}_i$  it follows that each $u\in V(l_i)$ is an owner of $c_1$ and $c_2$. If $c_1\in \mathcal{C}_i$ and $c_2\in\ \mathcal{C}_j$ with $i\not=j$ there is $u\in V(G)$, such that $u= V(l_i)\cap V(l_j)$ and since $l_i$ and $l_j$ are owners of $\mathcal{C}_i$ and $\mathcal{C}_j$ respectively $u$ is an owner of $c_1$ and $c_2$.\end{proof}
 
Let us recall that any complete graph of even order $q+1$ admits a $1$-factorization $\cal M$ by $q$ perfect matchings and that any complete graph of odd order $q+1$ admits a $ 2 $-factorization $\cal H$ by $q/2$ Hamiltonian cycles (see \cite{MR0411988}). Moreover, by symmetry, any perfect matching $M$ of a complete graph of order even is an element of some 1-factorization.

Given the above, we define different edge-colorations for some special graphs. The first two will be used to give the results stated in Theorem {\ref{teo3}}, whereas Definitions \ref{definition4} and \ref{definition6} are necessary to prove the results of Theorem {\ref {teo5}}. Definitions \ref{definition3} and \ref{definition5} are technical and auxiliary and they are not used in the proofs.

\begin{definition} Let $q+1$ be an odd integer. An edge-coloring $\varsigma\colon E(K_{q+1}) \rightarrow [q/2]$ is of \emph{Type} $\cal H$, if for every $i\in [q/2]$ the set $\{ uv\in E(K_{q+1}) \colon \varsigma(uv) = i\}$ is a Hamiltonian cycle of $K_{q+1}$. \end{definition}

\begin{definition} Let $q+1$ be an even integer. An edge-coloring $\varsigma\colon E(K_{q+1}) \rightarrow [(q+1)/2]$ is of \emph{Type} $\cal P$, if we obtain $\varsigma$ in the following way: Let $G$ be the graph (isomorphic to $K_{q+2}$) obtained adding a new vertex $u$  and all the $uV(K_{q+1})$-edges. Let $\varsigma_1$ be an edge-coloring of Type $\cal H$ of $G$, and let $\varsigma(e) := \varsigma_1(e)$ for every $e\in E(K_{q+1})$. Note that the set $\{ vw\in E(K_{q+1}) \colon \varsigma(vw) = i\}$ is a Hamiltonian path of $K_{q+1}$ for every $i\in [(q+1)/2]$.\end{definition}

\begin{definition}\label{definition3}(Auxiliary definition) Let $q+1$ be an odd integer. An edge-coloring $\varsigma\colon E(K_{q+2}) \rightarrow [q+1]$ is of \emph{Type} $\cal M$, if for every $i\in [q+1]$ the set of edges $\varsigma^{-1}(i)$ is a perfect matching of $K_{q+2}$. \end{definition}

\begin{definition}\label{definition4} Let $q+1$ be an odd integer. An edge-coloring $\varsigma \colon E(K_{q+1}-uv) \rightarrow [q-1]$ is of \emph{Type} $\cal C$, if we obtain $\varsigma$ in the following way: Let $G$ be the graph (isomorphic to $K_{q}$) obtained deleting the vertex $u$ and all the $uV(K_{q})$-edges. Let $\varsigma_1$ be an edge-coloring of Type $\cal M$ of $G$, for every $e\in E(K_{q+1}-uv)$ we define $\varsigma(e) := \varsigma_1(e)$ if $ e\in E(G) $ and $ \varsigma(uw) := \varsigma_1(vw)$ for all $ w\in V(G)-v $. The edge $uv$ is called the \emph{special edge} of $K_{q+1}$ (see Figure \ref{Fig2}). \end{definition}

\begin{figure}[!htbp]
\begin{center}
\includegraphics{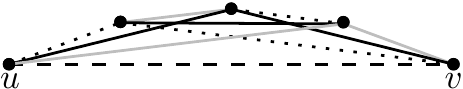}
\caption{An edge-coloring of Type $\cal C$ for $K_5-uv$. The special edge $uv$ is dashed.}\label{Fig2}
\end{center}
\end{figure}

\begin{definition}\label{definition5}(Auxiliary definition) Let $q+1$ be an odd integer, and let $M$ be a maximum matching of $K_{q+1}$. We give an edge-coloring $\varsigma : E(K_{q+1}-M) \rightarrow [q]$ of \emph{Type 1}, as follows: Let $G$ be the graph (isomorphic to $K_{q+2}$) obtained from $K_{q+1}-M$ adding a new vertex $u$, the edges of $M$ and all the $uV(K_{q+1}-M)$-edges. Let $\varsigma_1\colon E(G)\rightarrow [q+1]$ be an edge-coloring of Type $\cal M$ of $G$, such that $M\cup \{uv\}$ is an edge-color class, where $v$ is the vertex of $K_{q+1}$ which is not a vertex of $M$. For every $e\in E(K_{q+1}-M)$, $\varsigma(e) := \varsigma_1(e)$ (see Figure \ref{Fig3}).\end{definition}

By the construction of edge-coloring of Type 1, the vertex $v$ of $K_{q+1}-M$ is an owner of $q$ colors, but the remaining vertices are owners of only $q-1$ colors. Moreover, all $u_i\in K_{q+1}-M$ and $u_i\not= v$ has only one color for which it is not an owner, we call it {\emph{missing color of $u_i$}}. The following edge-coloring makes $K_{q+1}$ an owner of $q$ colors.

\begin{definition}\label{definition6} Let $q+1$ be an odd integer, $K_{q+1}$ the complete graph of order $q+1$ with the set of vertices $\{v,u_1,u_1',\dots,u_{q/2},u_{q/2}'\}$, $M=\{u_1u_1',\dots,u_{q/2}u_{q/2}\}$ a maximum matching of $K_{q+1}$, and let $G$ be the graph obtained adding a new vertex $u$ and the set of edges $\{uu_1,\dots,uu_{q/2}\}$. An edge-coloring $\varsigma : E(G) \rightarrow [q]$ is of \emph{Type 2}, if is obtaining in the following way: Let $\varsigma_1\colon E(K_{q+1}-M)\rightarrow [q]$ be an edge-coloring of Type 1 of the subgraph $K_{q+1}-M$ of $G$. For every $e\in E(K_{q+1})-M$, $\varsigma(e) := \varsigma_1(e)$. If $c_i$ is the missing color of $u_i$ and $c_i'$ is the missing color of $u_i'$ (for all $i\in[q/2]$) let $\varsigma(uu_i)=c_i$ and $\varsigma(u_iu_i')=c_i'$. The vertex $u$ is called the \emph{special vertex} of $G$ (see Figure \ref{Fig3}).\end{definition}

\begin{figure}[!htbp]
\begin{center}
\includegraphics{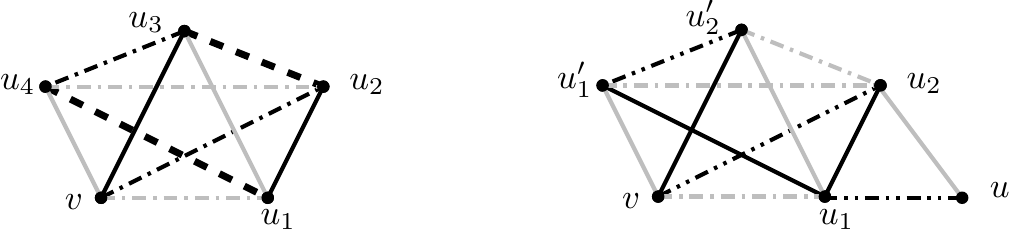}
\caption{Left: An edge-coloring of Type 1 for $K_5-M$, the maximum matching $M=\{u_1u_4,u_2u_3\}$ is dashed. Right: An edge-coloring of Type 2 for $G$.}\label{Fig3}
\end{center}
\end{figure}

\section{Connected-pseudoachromatic index of $K_n$}
We hereby begin by proving Theorems \ref{teo1} and \ref{teo2}, these results provide an upper bound of the connected-pseudoachromatic index of $K_n$. The proofs are analytic and their main idea is to compare two functions, one of which determines the maximum possible number of chromatic classes of size $x$, and the other one determines how many chromatic classes can be incident to one of the chromatic classes of size $x$.

\begin{proof}[Proof of Theorem \ref{teo1}]
Let $n\geq 8$ and  $\varsigma\colon E(K_n) \rightarrow [k]$ be a connected and complete $k$-edge-coloring of $K_n$ with $k=\psi_c'(K_n)$.  Let $x= min\{ \left|\varsigma^{-1}(i)\right| : i\in \left[k\right]\}$, that is, $x$ being the cardinality of the smallest chromatic class of $\varsigma$ considering that $x= \left|\varsigma^{-1}(k)\right|$. Since $\varsigma$ defines a partition of $E(K_n)$ it follows that $k\leq f_n(x):=n(n-1)/2x$. 

Additionally, since $\varsigma$ is connected, we can suppose that $\varsigma^{-1}(k) $ induces a tree. Moreover, the number of monochromatic trees in the subgraph induced by $E(K_{x+1})\setminus \varsigma^{-1}(k)$ is at most $\frac{\binom{x+1}{2}-x}{x}$. On the other hand, there are $(x+1)(n-(x+1))$ edges incident to some vertex of $\varsigma^{-1}(k)$ exactly once, we denote this set of edges by	 $X$. Since $\varsigma$ is complete, for every $j\in [k-1]$ there is an edge $e_j\in \varsigma^{-1}(j)$ that shares a vertex with some edge in $\varsigma^{-1}(k)$, therefore, the number of chromatic classes incident to $\varsigma^{-1}(k)$ containing some edge in $X$ is at most $(x+1)(n-(x+1))$ and the number of chromatic classes incident to $\varsigma^{-1}(k)$ containing no edge in $X$ is at most $\frac{\binom{x+1}{2}-x}{x}$. Hence, there are at most $g_n(x)-1$ chromatic classes incident with some edge in $\varsigma^{-1}(k)$ where: \[g_n(x)-1:=(x+1)(n-(x+1))+\frac{\binom{x+1}{2}-x}{x},\]
therefore: \[g_n(x)-1=(x+1)(n-x-1)+\frac{x+1}{2}-1,\]
it follows that $k\leq g_n(x)=(x+1)(n-x-1/2)$. Consequently, we have:  \[\psi'_{c}(K_{n})\leq\min\{f_n(x),g_n(x)\}.\]
Finally, we conclude that:
\[\psi'_{c}(K_{n})\leq\left\lfloor \max\left\{ \min\{f_n(x),g_n(x)\} \textrm{ with } x\in\mathbb{N}\right\}\right\rfloor.\]
\end{proof}

In order to prove Theorem \ref{teo2} we have to prove the following technical lemma:

\begin{lemma}\label{lema1}
Let $x_0,x_1\in \mathbb{R}^{+}$, such that $f_n(x_0)=g_n(x_0)$, and $f_n(x_1)=g_n(x_1)$ and suppose that: $x_0<x_1$, then \[g_n(x_0)=f_n(x_0)=\max\left\{ \min\{f_n(x),g_n(x)\} \textrm{ with } x\in\mathbb{R}^{+}\right\}\]
Furthermore: $x_0=\sqrt{n/2+1/16}-1/4$.
\end{lemma}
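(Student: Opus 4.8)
The plan is to reduce the intersection condition $f_n(x)=g_n(x)$ to a single polynomial identity, and then read off both the location of the roots and the sign of $f_n-g_n$ from a clean factorization. First I would clear the denominator: since $x>0$, the equation $f_n(x)=g_n(x)$ is equivalent to $2x\,g_n(x)=n(n-1)$, and expanding gives $2x(x+1)(n-x-\tfrac12)=-2x^3+(2n-3)x^2+(2n-1)x$. Hence, writing $P(x):=2x^3-(2n-3)x^2-(2n-1)x+n(n-1)$, one has $2x\bigl(f_n(x)-g_n(x)\bigr)=P(x)$ for all $x>0$, so the positive intersection points are exactly the positive roots of $P$. The key algebraic step is the factorization
\[ P(x)=(2x^2+x-n)\bigl(x-(n-1)\bigr). \]
I would discover the linear factor by the direct check $f_n(n-1)=g_n(n-1)=n/2$ (so $x=n-1$ is an intersection point), and then obtain the quadratic factor by polynomial division or by Vieta's formulas. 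Solving $2x^2+x-n=0$ gives the unique positive root $\tfrac{\sqrt{8n+1}-1}{4}=\sqrt{n/2+1/16}-1/4$, its other root being negative. Thus the only two positive roots of $P$ are $\sqrt{n/2+1/16}-1/4$ and $n-1$; since the hypothesis $x_0<x_1$ forces $x_0$ to be the smaller of the two and $\tfrac{\sqrt{8n+1}-1}{4}<n-1$ in the relevant range of $n$, this already yields the ``Furthermore'' formula $x_0=\sqrt{n/2+1/16}-1/4$ and identifies $x_1=n-1$.

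For the maximization claim I would use the factorization to extract the sign pattern of $f_n-g_n$. Since $2x^2+x-n$ has exactly one negative root, for $x>0$ its sign equals that of $(x-x_0)$, while $x-(n-1)=x-x_1$; therefore $2x\bigl(f_n-g_n\bigr)=P(x)$ is positive on $(0,x_0)$, negative on $(x_0,x_1)$, and positive on $(x_1,\infty)$. Consequently $\min\{f_n,g_n\}$ equals $g_n$ on $(0,x_0]\cup[x_1,\infty)$ and equals $f_n$ on $[x_0,x_1]$. I would then invoke monotonicity: $f_n$ is strictly decreasing on $\mathbb{R}^{+}$, and $g_n$ is a downward parabola with vertex at $v=n/2-3/4$. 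The remaining structural step is to locate $v$ relative to $x_0,x_1$: one checks $x_1=n-1>v$ always, and $x_0<v$ for the relevant range (in particular $n\geq 8$), so that $v\in(x_0,x_1)$. Hence $g_n$ is increasing on $(0,x_0]$ and decreasing on $[x_1,\infty)$.

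Putting these together yields the value of the maximum. On $(0,x_0]$ the minimum is the increasing function $g_n$, with supremum $g_n(x_0)$; on $[x_0,x_1]$ it is the decreasing function $f_n$, with supremum $f_n(x_0)=g_n(x_0)$; and on $[x_1,\infty)$ it is the decreasing function $g_n$, with supremum $g_n(x_1)=f_n(x_1)<f_n(x_0)=g_n(x_0)$, the strict inequality holding because $f_n$ is decreasing and $x_0<x_1$. Therefore the global maximum of $\min\{f_n,g_n\}$ over $\mathbb{R}^{+}$ equals $f_n(x_0)=g_n(x_0)$, attained at $x_0$, which is the first assertion. The hard part will be nothing computational but rather spotting and verifying the factorization of $P$: once the root $x=n-1$ is recognized from the identity $f_n(n-1)=g_n(n-1)=n/2$, everything else is routine, so the real content is this factorization that collapses a cubic into a solvable quadratic and thereby makes the closed form for $x_0$ possible.
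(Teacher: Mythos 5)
Your proposal is correct and complete. The paper's own proof follows the same overall strategy (locate the two intersection points, use the shapes of $f_n$ and $g_n$ to read off where each is the minimum, then conclude by monotonicity of $f_n$), but it executes the two key steps differently and less explicitly. For the sign pattern of $f_n-g_n$ the paper simply appeals to the fact that $f_n$ is a hyperbola and $g_n$ a parabola, citing a figure; you instead derive it from the factorization $2x\bigl(f_n(x)-g_n(x)\bigr)=(2x^2+x-n)\bigl(x-(n-1)\bigr)$, which makes that step rigorous and, as a bonus, identifies $x_1=n-1$ in closed form (the paper never does). For the formula for $x_0$ the paper rewrites $f_n(x_0)=g_n(x_0)$ as a quadratic in $n$, namely $n^2-(2x_0^2+2x_0+1)n+x_0(x_0+1)(2x_0+1)=0$, whose roots are $n=x_0+1$ and $n=2x_0^2+x_0$; this is exactly the same algebraic content as your factorization viewed from the other variable, and inverting $n=2x_0^2+x_0$ gives the stated closed form. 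Your treatment of the maximization is also more careful than the paper's: you verify that the vertex $n/2-3/4$ of $g_n$ lies in $(x_0,x_1)$ so that $g_n$ is increasing on $(0,x_0]$ and decreasing on $[x_1,\infty)$, whereas the paper compresses all of this into the single observation $f_n(x_0)>f_n(x_1)$. In short, you buy full rigor and an explicit $x_1$ at the cost of having to spot the cubic factorization; the paper buys brevity at the cost of leaning on a picture.
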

\begin{proof}
As $f_n$ is a hyperbola and $g_n$ is a parabola (see Figure \ref{Fig1}) we see that $f_n(x) \leq g_n(x)$ for $x_0 \leq x \leq x_1$ and $g_n(x)<f_n(x)$ in any other case when $x \in \mathbb{R}^{+}$. Since $f_n(x_0)>f_n(x_1)$ it follows that $g_n(x_0)=f_n(x_0)=\max\left\{ \min\{f_n(x),g_n(x)\} \textrm{ with } x\in\mathbb{R}^{+}\right\}.$

\begin{figure}[!htbp]
\begin{center}
\includegraphics{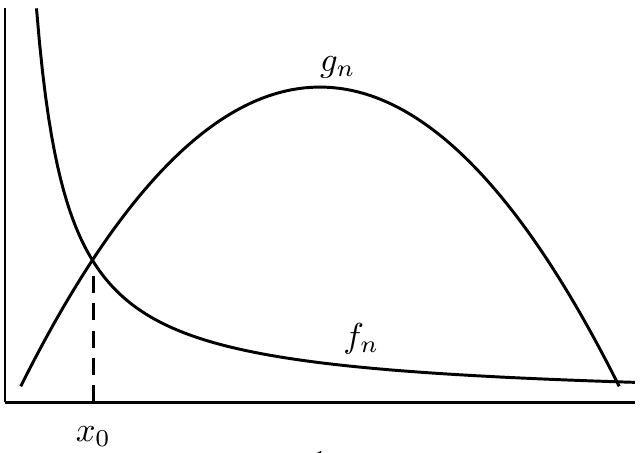}
\caption{The functions $g_n$ and $f_n$ for some fixed value $n$.}\label{Fig1}
\end{center}
\end{figure}
Finally, the equation $f_n(x_0)=g_n(x_0)$ is reduced to $n^2-(2x_0^2+2x_0+1)n+x_0(x_0+1)(2x_0+1)=0$. The positive solution for $n$ is $n=2x_0^2+x_0$ and the lemma holds true because $x_0=\sqrt{n/2+1/16}-1/4$ provides the positive solution. 
\end{proof}

\begin{proof}[Proof of Theorem \ref{teo2}]
Since $g_n(x_0)=nx_0-x_0^2-3x_0/2+n-1/2$ and $x_0=\sqrt{n/2+1/16}-1/4$ we obtain: 
\[g_n(x_0)=(n-1)(\sqrt{n/2+1/16}+1/4),\]
therefore: $g_n(x_0)\leq \frac{1}{\sqrt{2}}n^{\frac{3}{2}}+\Theta(n)$. By Lemma \ref{lema1} and Theorem \ref{teo1} the result follows.
\end{proof}


To conclude this section we prove Theorem \ref{teo3}, in which we give a lower bound for the connected-pseudoachromatic index of complete graphs $K_n$ when $n$ has a specific value related to the existence of the projective planes. 

\begin{proof}[Proof of Theorem \ref{teo3}]
Suppose $q$ is the order of a projective plane $\Pi_q$. Then $\Pi_q$ has $n=q^2+q+1$ points and $n$ lines. Let $K_n$ be a representation of $\Pi_q$ given in Section 2. We assign a coloring of $K_n$ in the following way: if $q$ is odd, each line $K^i_{q+1}$ has a coloring $\varsigma_i\colon E(K^i_{q+1})\rightarrow \{1+(i-1)q/2,\dots,q/2+(i-1)q/2\}$ of Type $\cal H$ (for $i\in [n]$). Each line $K^i_{q+1}$ may thus be divided into $q/2$ color classes, each one of size $q+1$ and it is an owner of $q/2$ colors. By Lemma \ref{lema2} the $(qn/2)$-edge-coloring is complete and it is connected by construction. Similarly, if $q$ is even, $K_n$ has a connected and complete $(q+1)n/2$-edge-coloring using edge-coloring of Type $\cal P$ in its lines.
\end{proof}

Before proving Theorem \ref{teo4} we state the following simple lemma:

\begin{lemma} \label{lema3}
For any connected graph $G$, if $H$ is a connected subgraph of $G$, then $\psi_c(G)\geq\psi_c(H)$.
\begin{proof}
Given a connected and complete coloring $\varsigma$ of $H$ with $\psi_c(H)$ colors we extend this to a coloring of $G$ by coloring the set of vertices of a component $G'$ of $G\setminus H$ with some color that appears on a vertex of $H$ adjacent to a vertex of $G'$.
\end{proof}
\end{lemma}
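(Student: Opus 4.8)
The plan is to prove this monotonicity statement by extending an optimal coloring of $H$ to all of $G$ without changing the number of colors, while keeping completeness and the connectedness of each color class. First I would fix a connected and complete coloring $\varsigma\colon V(H)\to[k]$ with $k=\psi_c(H)$, and build a coloring $\varsigma^*$ of $G$ that uses only the colors of $[k]$ and coincides with $\varsigma$ on $V(H)$. Completeness of $\varsigma^*$ then comes for free: since $H$ is a subgraph of $G$ and $\varsigma^*$ agrees with $\varsigma$ on $H$, every pair of colors already realized by an edge of $H$ remains realized by that same edge in $G$, so no pair of colors is lost.

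The substantive step is to color the vertices of $V(G)\setminus V(H)$ so that connectedness of the color classes is preserved. The idea I would use is to treat each connected component $G'$ of $G\setminus H$ separately. Because $G$ is connected, every such $G'$ must contain a vertex $w$ adjacent in $G$ to some vertex $v\in V(H)$ --- otherwise $G'$ together with its incident edges would be a part of $G$ disconnected from $H$, contradicting the connectedness of $G$. I would then give every vertex of $G'$ the single color $c:=\varsigma(v)$, and do this independently over all components.

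To check that each color class of $\varsigma^*$ is connected, I would fix a color $c\in[k]$ and observe that its class is the union of the subgraph induced by $\varsigma^{-1}(c)$ inside $H$ (connected by hypothesis) with all the components $G'$ that were assigned $c$. Each such $G'$ is connected, and it is joined to the $H$-part by the edge $vw$ with $v$ and $w$ both colored $c$; since every attached component hangs directly off the single connected $H$-part, the whole class is connected. This yields a connected and complete $k$-coloring of $G$, and hence $\psi_c(G)\ge k=\psi_c(H)$.

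I do not expect a genuine obstacle here, since the lemma is essentially an extension argument. The only place the hypotheses are truly used is the claim that each component of $G\setminus H$ touches $H$, which is precisely where the connectedness of $G$ enters. A small point I would state explicitly, to avoid any worry, is that several components may receive the same color $c$; this is harmless, as they all attach to the same connected $H$-part of that class, so connectedness is maintained throughout.
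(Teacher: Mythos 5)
Your proposal is correct and follows essentially the same route as the paper: extend an optimal connected complete coloring of $H$ by assigning each component of $G\setminus H$ a color appearing on an adjacent vertex of $H$. You simply spell out the completeness and connectedness checks that the paper leaves implicit.
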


\begin{proof}[Proof of Theorem \ref{teo4}]
The proof depends on a strengthened version of Bertrand's ``Postulate,'' which follows from the Prime Number Theorem (see \cite{MR506522,MR0258720}): For any $\epsilon>0$, there exists an $N_\epsilon$, such that for any real $x\geq N_\epsilon$ there is a prime $q$ between $x$ and $(1+\epsilon)x$. Let $\epsilon>0$ be given, and suppose $n>(N_\epsilon+1)^2(1+\epsilon)^2$. Let $x=\sqrt{n}/(1+\epsilon) -1$, so $x\geq N_\epsilon$. We may now select a prime $q$ with $x\leq q \leq (1+\epsilon)x$. Note that $q^2+q+1\leq (x+1)^2(1+\epsilon)^2=n$. Since projective planes of all prime orders exist it follows from Theorem \ref{teo3} and Lemma \ref{lema3} that: \[\psi'_c(K_n)\geq\psi'_c(K_{q^2+q+1})\geq \frac{q}{2}(q^2+q+1)>\frac{q^3}{2}\geq \frac{x^3}{2}=\frac{(\sqrt{n}-1-\epsilon)^3}{2(1+\epsilon)^3}.\]
Since $\epsilon$ was arbitrarily small the result follows.
\end{proof}

\section{Pseudoachromatic index of complete graphs related to projective planes of even order}

To prove Theorem \ref{teo5} we will exhibit a complete $(q^3+2q-3)$-coloring in the edges of $K_{q^2+q+1}$ for $q$ a power of two since the projective plane of order $q$ always exists. To achieve this we color the edges of the complete subgraphs corresponding to the lines of the projective plane using the colorations of Type $\cal C$ and Type $2$ in such a way that each line is always an owner of a set of colors according to the coloration type. 

\begin{proof}[Proof of Theorem \ref{teo5}]
In order to prove this theorem we exhibit a complete edge-coloring of $K_{n}$ for $n=q^2+q+1$, with $k:=q^3 + 2q -3$ colors, and $q\geq 2$ a power of $2$. Let $C$ be a set of $k$ colors, and let $\{C_1, C_2,\dots, C_{n}\}$ be a partition of $C$ in the following way: $C_i$ is a set of $q-1$ colors for $1 \leq i \leq q^2-q+3$, and $C_i$ is a set of $q$ colors, for $q^2-q+4 \leq i \leq n $. Thus, we have $(q-1)(q^2-q+3)+q(2q-2)=q^3 + 2q -3$ colors. 

Let $G$ be a graph isomorphic to $K_n$ representing $\Pi_q$, and let $L$ be the set of lines of $G$. Recall that every element of $L$ is a subgraph $K_{q+1}$, but in order to simplify the writing of the proof we refer to it as a line. Furthermore, let $l_n\in L$, such that $V(l_n)=\{v_1,v_2,\dots,v_{q+1}\}$ and naming the set of lines through the vertex $v_i$ by $L_i$, such that $L_i\subseteq L\setminus\{l_{n}\}$. Every set $L_i$ has $q$ lines, and we label its lines in the following way: For each $i\in[q+1]$, let $L_{i} = \{l_{q(i-1)+1},l_{q(i-1)+2},\dots,l_{q(i-1)+q}\}$. Therefore, $L=L_1\cup L_2 \cup\dots\cup L_{q+1}\cup \{l_n\}=\{l_1,l_2,\dots,l_n\}$.

To color the edges of $K_{n}$ with $\varsigma\colon E(K_n)\rightarrow [k]$ we define a partial coloring to each line as follows:

\noindent {\it i)} Firstly, we color the lines of $L_1$, $L_2$, ... and $L_{q-1}$ using partial colorings of Type $\cal C$ and the colors of $C_{1}$, $C_{2}$ ,..., $C_{q^2-q}$; therefore, it is necessary to choose carefully the special edge in every line of $L_1\cup L_2\cup\dots L_{q-1}$. To choose these edges in lines of $L_1$ we will use $v_1$ and vertices in two lines of $L_q$, similarly, to choose the special edges in lines of $L_2$ we will use $v_2$ and vertices in two lines of $L_q$ different than the previous ones, and we continue  until choosing the special edges in the lines of $L_{q/2}$ using $v_{q/2}$ and the last two lines of $L_q$. Analogously, to choose the special edges in the lines of $L_{q/2+1}$, $L_{q/2+2}$,... and $L_{q-1}$ we use a vertex of $\{v_{q/2+1}, v_{q/2+2},..., v_{q-1}\}$ and vertices in two different lines of $L_{q+1}\setminus\{l_{n-1},l_{n-2}\}$. 

For $i\in\{1,\dots, q-1\}$ and $j\in\{1,\dots, q\}$, let $\varsigma_{q(i-1)+j}: E(l_{q(i-1)+j}-e_{q(i-1)+j}) \rightarrow C_{q(i-1)+j}$ be an edge-coloring of Type $\cal C$ with the special edge $e_{q(i-1)+j}:=v_iu_{q(i-1)+j}$ where $u_{q(i-1)+j}$ depends of the values of $i$ and $j$: 

\[u_{q(i-1)+j}:=  \left\{ \begin{array}{ll}l_{q(i-1)+j}\cap l_{q(q-1)+2i-1} &\mbox{ if } i,j\in\{1,\dots,q/2\}
\\[2ex]
l_{q(i-1)+j}\cap l_{q(q-1)+2i} &\mbox{ if  $i\in\{1,\dots,q/2\}$ and  
$ j\in\{q/2+1,\dots,q\}$ }
\\[2ex]
l_{q(i-1)+j}\cap l_{q^2+2i-1} &\mbox{ if $i\in\{q/2+1,\dots,q-1\}$ and $j\in\{1,\dots,q/2\}$  } 
\\[2ex]
l_{q(i-1)+j}\cap l_{q^2+2i} &\mbox{ if $i\in\{q/2+1,\dots,q-1\}$ and $j\in\{q/2+1,\dots,q\}$.}
 \end{array}\right.\]
 
 Figure \ref{Fig4} depicts the first and the second cases for $i=1$. Note that each line of $L_1$, $L_2$,... and $L_{q-1}$ is an owner of $q-1$ colors.

\begin{figure}[!htbp]
\begin{center}
\includegraphics{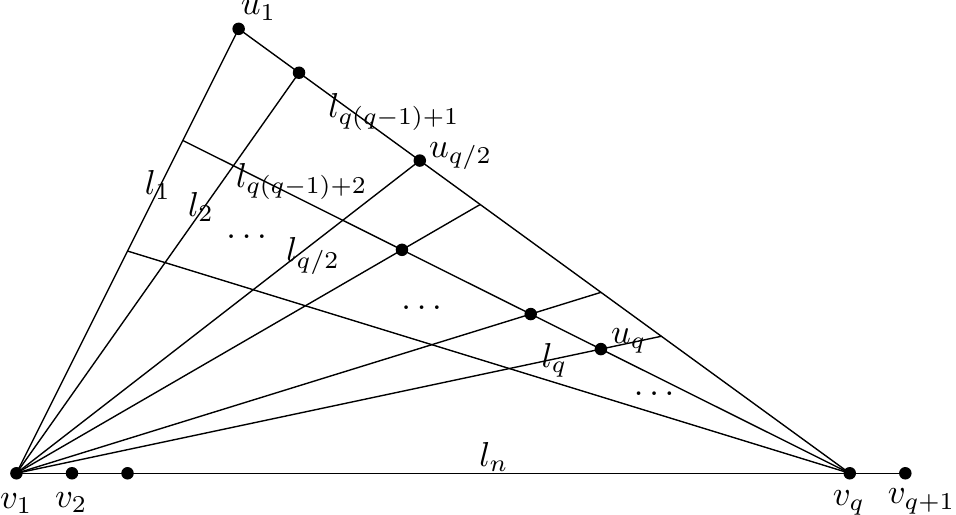}
\caption{A drawing of some lines $l_j$ and some vertices $u_j$ for $i=1$.}\label{Fig4}
\end{center}
\end{figure}

\noindent {\it ii)} Secondly, we color the lines $l_n$, $l_{n-1}$ and $l_{n-2}$ with $q-1$ colors, each line using partial colorings of Type $\cal C$, and the colors of $C_{q^2-q+3}$, $C_{q^2-q+2}$ and $C_{q^2-q+1}$; therefore, we have to choose a special edge in every line:

For $i\in\{0,1,2\}$, let $\varsigma_{n-i}: E(l_{n-i}-e_{n-i})\rightarrow C_{q^2-q+3-i}$ an edge-coloring of Type $\cal C$ with the special edge $e_{n-i}$, such that it is any edge of $l_{n-i}$. Note that $l_n$, $l_{n-1}$ and $l_{n-1}$ are owners of $q-1$ colors.

As a remark, this step is due to the fact that every color class will have $q/2+1$ edges and $(q/2+1)(q^3+2q-3)=\binom{n}{2}-3$.

\noindent {\it iii)} Now we shall assign colors to the special edges and the edges of lines in $L_{q}\cup L_{q+1}\setminus\{l_{n-1},l_{n-2}\}$. To achieve this we define subgraphs $G_j$ as uncolored lines of $L_{q}\cup L_{q+1}\setminus\{l_{n-1},l_{n-2}\}$ together with some star subgraph made of special edges, then we use partial colorings of Type $2$ and the colors of $C_{q^2-q+4}$, $C_{q^2-q+5}$,... and $C_{n}$.

Let $G_j:=(V(G_j),E(G_j))$ be graphs defined as follows:
\begin{itemize}
\item First, we use the lines of $L_{q}$. Let $V(G_j)=V(l_{q(q-1)+j})\cup\{v_{\left\lceil j/2\right\rceil }\}$ and $E(G_j)=E(l_{q(q-1)+j})\cup\{e_{(j-1)q/2+1},\dots,e_{(j-1)q/2+q/2}\}$ with $j\in\{1,\dots,q\}$.

\item Second, we use the lines of $L_{q+1}\setminus\{l_{n-1},l_{n-2}\}$. Let $V(G_j)=V(l_{q^2+j})\cup\{v_{q/2+\left\lceil j/2\right\rceil }\}$ and $E(G_j)=E(l_{q^2+j})\cup\{e_{q^2/2+(j-1)q/2+1},\dots,e_{q^2/2+(j-1)q/2+q/2}\}$ with $j\in\{1,\dots,q-2\}$.
\end{itemize}

Now we can color the $G_j$ subgraphs.

\begin{itemize}
\item Let $\varsigma_{q(q-1)+j}: E(G_j)\rightarrow C_{q^2-q+3+j}$ be an edge-coloring of Type $2$ with the special vertex $v_{\left\lceil j/2\right\rceil }$ with $j\in\{1,\dots,q\}$.

\item Let $\varsigma_{q^2+j}: E(G_j)\rightarrow C_{q^2+3+j}$ be also an edge-coloring of Type $2$ with the special vertex $v_{q/2+\left\lceil j/2\right\rceil }$ with $j\in\{1,\dots,q-2\}$.
\end{itemize}

Naturally, each line of $L_q$ and $L_{q+1}\setminus\{l_{n-1},l_{n-2}\}$ is an owner of $q$ colors.

\noindent {\it iv)} To sum up, we have already assigned a color to each edge of $ E(K_n)\backslash \{e_n,e_{n-1},e_{n-2}\}$. Finally we assign the color $1$ to the set of $ 3 $ edges $ \{e_n,e_{n-1},e_{n-2}\} $.

Since each line $l_i$ is an owner of each color of $C_i$ ($1\leq i\leq n$) by Lemma \ref{lema2} it follows that the resultant edge-coloring $ \varsigma:=\overset{n}{\underset{i=1}{\bigcup}}\varsigma_{i} $ of $ K_n $ is a complete edge-coloring using $k$ colors.
\end{proof}



The authors would like to state that after submitting this paper for review they received an e-mail from David Wood noting that there exists a result regarding the Hadwiger number of $L(G)$ relevant to this paper. DeVos, Dvorak, Fox, McDonald, Mohar and Scheide \cite{devos2014minimum} proved that the line graph of any simple graph $G$ with average degree $d$ has a clique minor of order at least $cd^{3/2}$ for some absolute constant $c > 0$. Our result gives the same order of magnitude when $G=K_n$ (when $d=n-1$); furthermore, we show that $\frac{1}{2}\leq c \leq \frac{1}{\sqrt{2}}$.


\subsection*{Acknowledgment}
The authors wish to thank the anonymous referees of this paper for their helpful remarks and suggestions.

\end{document}